\renewcommand{\baselinestretch}{1.2}
\renewcommand{\thefootnote}{\fnsymbol{footnote}}	
\newcommand\DateFootnote{
\begingroup
\renewcommand\thefootnote{}
\footnote{
\today}
\setcounter{footnote}{0}
\vspace*{-3ex}
\endgroup}
\renewcommand\section{\@startsection {section}{1}{\z@}%
                                   {-3ex \@plus -1ex \@minus -.2ex}%
                                   {2ex \@plus.2ex}%
                                   {\normalfont\large\bfseries}}
\renewcommand\subsection{\@startsection{subsection}{2}{\z@}%
                                     {-2.5ex\@plus -1ex \@minus -.2ex}%
                                     {1.5ex \@plus .2ex}%
                                     {\normalfont\normalsize\bfseries}}
\renewcommand\subsubsection{\@startsection{subsubsection}{3}{\z@}%
                                     {-2ex\@plus -1ex \@minus -.2ex}%
                                     {1ex \@plus .2ex}%
                                     {\normalfont\normalsize\bfseries}}
 \renewcommand\paragraph{\@startsection{paragraph}{4}{\z@}%
                                    {1.5ex \@plus.5ex \@minus.2ex}%
                                    {-1em}%
                                    {\normalfont\normalsize\bfseries}}
\renewcommand\subparagraph{\@startsection{subparagraph}{5}{\parindent}%
                                       {1.5ex \@plus.5ex \@minus .2ex}%
                                       {-1em}%
                                      {\normalfont\normalsize\bfseries}}
\newcommand{\msn}[1]{MR:\,\href{http://www.ams.org/mathscinet-getitem?mr=MR#1}{#1}}
\theoremstyle{plain}
\newtheorem{thm}{Theorem}
\newtheorem{lem}[thm]{Lemma}
\newtheorem{conj}[thm]{Conjecture}
\newtheorem{prop}[thm]{Proposition}
\theoremstyle{definition}
\newcommand{\CEIL}[1]{\ensuremath{\protect\left\lceil#1\right\rceil}}
\newcommand{\ceil}[1]{\lceil{#1}\rceil}
\newcommand{\floor}[1]{\lfloor{#1}\rfloor}
\renewcommand{\geq}{\geqslant}
\renewcommand{\leq}{\leqslant}
\begin{document}

\title{Digraph 3-Colouring}
\author{}

{\Large\bfseries\boldmath\scshape Majority Colourings of Digraphs}

\DateFootnote

{\large 
Stephan Kreutzer\,\footnotemark[2] \quad 
Sang-il Oum\,\footnotemark[3]  \quad 
Paul Seymour\,\footnotemark[4]  \quad 
\\
 Dominic van der Zypen\quad  
 David~R.~Wood\,\footnotemark[5] 
}

\footnotetext[2]{Chair for Logic and Semantics, Technical University Berlin, Germany (\texttt{stephan.kreutzer@tu-berlin.de}). Research partly supported by DFG Emmy-Noether Grant Games and by the European Research Council (ERC) under the European Unions Horizon 2020 research and innovation programme (grant agreement No 648527).}

\footnotetext[3]{Department of Mathematical Sciences, KAIST, Daejeon, South Korea (\texttt{sangil@kaist.edu}).}

\footnotetext[4]{Department of Mathematics, Princeton University, New Jersey, U.S.A. (\texttt{pds@math.princeton.edu}).}

\footnotetext[5]{School of Mathematical Sciences, Monash University, Melbourne, Australia (\texttt{david.wood@monash.edu}).\\ 
Research supported by the Australian Research Council.}

\emph{Abstract.} We prove that every digraph has a vertex 4-colouring such that for each vertex $v$, at most half the out-neighbours of $v$ receive the same colour as $v$. We then obtain several results related to the conjecture obtained by replacing 4 by 3. 
\bigskip
\bigskip
\hrule

\renewcommand{\thefootnote}{\arabic{footnote}}

\section{Introduction}
\label{Intro}

A \emph{majority colouring} of a digraph is a function that assigns each vertex $v$ a colour, such that at most half the out-neighbours of $v$ receive the same colour as $v$. In other words, more than half the out-neighbours of $v$ receive a colour different from $v$ (hence the name `majority'). Whether every digraph has a majority colouring with a bounded number of colours was posed as an open problem on mathoverflow \citep{Zypen-mathoverflow}. In response, Ilya Bogdanov proved that a bounded number of colours suffice for tournaments. The following is our main result. 

\begin{thm}
\label{4Colouring}
Every digraph has a majority 4-colouring.
\end{thm}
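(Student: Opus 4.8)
The plan is to produce the colouring by minimising a potential over all $4$-colourings. Fix four colours and, among all $4$-colourings $c$ of the given digraph $D$, pick one minimising the number of \emph{monochromatic arcs}, that is, arcs $uv$ with $c(u)=c(v)$. I would then try to show that any such $c$ is automatically a majority colouring.

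The starting observation is purely local. Fix a vertex $v$, and for each colour $j$ let $n_j$ and $m_j$ denote the numbers of out-neighbours and in-neighbours of $v$ coloured $j$, so that $\sum_j n_j=d^+(v)$ and $\sum_j m_j=d^-(v)$. Recolouring $v$ from $c(v)$ to $j$ changes the number of monochromatic arcs by precisely $(n_j-n_{c(v)})+(m_j-m_{c(v)})$, so minimality gives $n_j+m_j\ge n_{c(v)}+m_{c(v)}$ for every colour $j$. Summing this over the four colours yields $d^+(v)+d^-(v)\ge 4\bigl(n_{c(v)}+m_{c(v)}\bigr)\ge 4n_{c(v)}$, hence $n_{c(v)}\le\tfrac14\bigl(d^+(v)+d^-(v)\bigr)$. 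Consequently every vertex $v$ with $d^-(v)\le d^+(v)$ satisfies $n_{c(v)}\le\tfrac12 d^+(v)$, i.e.\ at most half of its out-neighbours share its colour.

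What remains is to handle vertices with $d^-(v)>d^+(v)$, and I expect this to be the crux: the ``monochromatic arcs'' potential is symmetric between heads and tails, so on its own it cannot exploit the asymmetry in the definition of a majority colouring. The plan for these vertices is to extract more from minimality. If such a $v$ is still not satisfied, then by the displayed bound $d^-(v)>d^+(v)$ and only few in-neighbours of $v$ carry the colour $c(v)$; moreover, for the colour $j$ with $n_j$ smallest we have $n_j<\tfrac16 d^+(v)$ while $n_j+m_j\ge n_{c(v)}+m_{c(v)}$ with $n_{c(v)}>\tfrac12 d^+(v)$, so $v$ has more than $\tfrac13 d^+(v)$ in-neighbours coloured $j$. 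One then tries a \emph{joint} recolouring — recolour $v$ to $j$ and simultaneously recolour a suitably chosen in-neighbour (or a short alternating path through such in-neighbours) coloured $j$ back towards $c(v)$ — arranged so that the number of monochromatic arcs strictly decreases, contradicting the choice of $c$. An alternative route for this case is an induction on $|V(D)|$ in which one repeatedly deletes an out-neighbourless vertex (vacuously satisfied) and recolours it to clash with as few of its in-neighbours as possible, the bookkeeping coming down to a parity argument for in-neighbours of odd out-degree.

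In short, the backbone is the one-line potential argument, which already disposes of all vertices with $d^-(v)\le d^+(v)$; the work — and the main obstacle — lies in upgrading it so that it also handles vertices of large in-degree, either via augmented local moves or via the degree-reduction just sketched.
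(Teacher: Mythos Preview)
Your potential argument is precisely Lov\'asz's undirected result (which the paper recalls in its introduction): minimising the number of monochromatic arcs yields $n_{c(v)}+m_{c(v)}\le\tfrac14\bigl(d^+(v)+d^-(v)\bigr)$, and this inequality is symmetric in heads and tails, so on its own it can never separate $n_{c(v)}$ from $m_{c(v)}$. Neither of your two suggested fixes closes the gap. The ``joint recolouring'' move is not actually specified: recolouring an in-neighbour $u$ of $v$ alters the contribution of every arc leaving $u$, and there is no reason a single swap, or a short alternating chain, must strictly decrease the global count. The induction that ``repeatedly deletes an out-neighbourless vertex'' cannot even begin on a digraph without sinks---for instance on any strongly connected digraph, including the directed odd cycles that already need three colours---so it does not touch the hard case at all. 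In short, the place you identify as ``the crux'' is not a detail to be filled in; it is exactly where this symmetric potential fails, and your sketch does not supply the missing idea.

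The paper's proof sidesteps the symmetric potential entirely and is essentially one line. Fix any linear order on $V(D)$. Greedily $2$-colour the vertices left-to-right: when $v$ is reached, at most half of its out-neighbours lying to its left (all already coloured) use either colour, so pick one used by at most half of them. Independently, greedily $2$-colour right-to-left by the same rule applied to out-neighbours lying to the right. The product of the two colourings uses four colours; an out-neighbour $w$ of $v$ can share $v$'s product colour only if it shares the first colour when $w$ lies to the left of $v$, or the second colour when $w$ lies to the right, and in each half at most half the out-neighbours qualify. Hence at most $\tfrac12 d^+(v)$ out-neighbours match $v$ overall. The point is that each greedy pass consults only \emph{out}-neighbours of the vertex being coloured, so the in-degree never enters and the $d^-(v)>d^+(v)$ case simply does not arise.
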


\begin{proof}
Fix a vertex ordering. First, 2-colour the vertices left-to-right so that for each vertex $v$, at most half the out-neighbours of $v$ to the left of $v$ in the ordering receive the same colour as $v$. Second, 
2-colour the vertices right-to-left so that for each vertex $v$, at most half the out-neighbours of $v$ to the right of $v$ in the ordering receive the same colour as $v$. The product colouring is a majority 4-colouring.
\end{proof}

Note that this proof implicitly uses two facts: (1) every digraph has an edge-partition into two acyclic subgraphs, and (2) every acyclic digraph has a majority 2-colouring.

The following conjecture naturally arises: 

\begin{conj}
\label{Main}
Every digraph has a majority 3-colouring.
\end{conj}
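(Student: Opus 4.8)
To attack \cref{Main} I would treat vertices of large and small out-degree by entirely different methods and then combine them. First a remark on why the proof of \cref{4Colouring} does not simply descend to three colours: there each vertex receives a pair in $\{0,1\}^2$, and the two coordinates are exactly what is needed to control the out-neighbours lying to the left and to the right of $v$, so identifying any two of the four product colours destroys the invariant. So fix a large absolute constant $d_0$; call a vertex \emph{heavy} if its out-degree is at least $d_0$ and \emph{light} otherwise, and write $H$ and $L$ for the two sets. Heavy vertices will be coloured probabilistically; light vertices must be dealt with by hand.

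For $H$ the tool is the Lov\'asz Local Lemma. Colour $V(D)$ uniformly at random from $\{1,2,3\}$ and, for each $v$, let $A_v$ be the event that strictly more than $\tfrac12 d^+(v)$ out-neighbours of $v$ receive the colour $c(v)$. Conditioning on $c(v)$, the number of out-neighbours of $v$ sharing its colour is $\mathrm{Bin}(d^+(v),\tfrac13)$, so $\p{A_v}\le e^{-c\,d^+(v)}$ by a Chernoff bound, the point being that $\tfrac13<\tfrac12$. Since $A_v$ is determined by the colours on $N^+[v]:=\{v\}\cup N^+(v)$, it is independent of all $A_u$ with $N^+[u]\cap N^+[v]=\emptyset$, and the number of $u$ violating this is at most $(d^+(v)+1)\bigl(\Delta^-(D)+1\bigr)$. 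Hence, provided every out-degree is at least $d_0$ and the maximum in-degree is bounded, the exponentially small probabilities beat the polynomial dependency degree and the (weighted) Local Lemma yields a majority $3$-colouring. Removing the in-degree hypothesis is a first, essentially technical, hurdle: one would pre-process the high-in-degree vertices, or run a Moser--Tardos-style resampling and show that only a controlled set of vertices is ever recoloured.

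The genuine obstacle is $L$. A light vertex of out-degree $1$ only has to differ from its unique out-neighbour, and one of out-degree $2$ only has to avoid both its out-neighbours receiving its own colour, so light vertices are trivial in isolation; the trouble is that one cannot delete a light vertex $v$, majority-colour $D-v$, and reinstate $v$, because doing so adds a monochromatic out-neighbour to each in-neighbour $w$ of $v$, and if $w$ has even out-degree and was already ``tight'' this breaks $w$'s constraint. The plan is an \emph{absorption} argument: run the Local-Lemma colouring on the heavy part with room to spare, so that each heavy vertex starts with a positive budget of slack, and then process the light vertices one at a time in a carefully chosen order, each time picking a colour (and possibly locally recolouring) so that the accumulated damage to heavy vertices never exhausts the budget. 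Making this bookkeeping survive the bad configurations --- many light vertices sharing in-neighbours, and long induced paths and cycles through light vertices, where the parity obstruction that makes odd out-degrees harder than even ones bites --- is where I expect the real difficulty to lie, and is presumably why the conjecture is still open.
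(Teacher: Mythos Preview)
This statement is Conjecture~\ref{Main}; the paper does \emph{not} prove it, and your final sentence correctly notes it is open. So there is no proof in the paper to compare against --- only the partial results of \cref{LogarithmicOutdegree,LocalLemmaColouring}, and your probabilistic half reproduces those exactly: same events $A_v$, same Chernoff bound $\p{A_v}\le e^{-c\,d^+(v)}$, same dependency graph through common out-neighbours, same weighted Local Lemma. That part of your plan is already the paper's contribution, not a step beyond it.

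The two bridges you propose past this point are where the plan is not yet an argument. Calling the in-degree hypothesis ``essentially technical'' is too optimistic. For a heavy $v$ with $d^+(v)=d_0$ the bound $\p{A_v}\le e^{-cd_0}$ is a \emph{fixed constant}, while the dependency degree $\approx d_0\cdot\Delta^-$ can be of order $n$ in a general digraph; the Local Lemma simply does not apply. Moser--Tardos does not help: it is an algorithmic form of the same lemma under the same hypotheses, not a stronger existence statement. Fixing colours of high-in-degree vertices in advance removes the dependency through them but can push the conditional probability $\p{A_v}$ up to $\Omega(1)$ (if many fixed out-neighbours of $v$ happen to share a colour), so nothing is gained. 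That the paper's list of open problems asks whether even some constant $\beta<1$ can replace $\tfrac12$ suggests the authors did not regard the in-degree barrier as a technicality either.

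The absorption step for light vertices is likewise a hope rather than a mechanism. You can buy slack by strengthening $A_v$ to ``more than $(\tfrac12-\epsilon)d^+(v)$ out-neighbours share $v$'s colour'', and when the LLL applies it will avoid these stronger events; but the slack at a heavy $v$ is only $\epsilon\,d^+(v)$, while its light out-neighbourhood can be almost all of $N^+(v)$, and the parity obstructions you yourself flag (odd cycles through light vertices, many light vertices sharing heavy in-neighbours) force recolouring cascades that nothing in the outline bounds. Your diagnosis of where the difficulty lies is accurate and matches the paper's; the plan does not yet cross it.
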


This conjecture would be best possible. For example, a majority colouring of an odd directed cycle is proper (since each vertex has out-degree 1), and therefore three colours are necessary. There are examples with large outdegree as well. For odd $k\geq 1$ and prime $n\gg k$, let $G$ be the directed graph with $V(G)=\{v_0,\dots,v_{n-1}\}$ where $N_G^+(v_i)=\{v_{i+1},\dots,v_{i+k}\}$ and vertex indices are taken modulo $n$. Suppose that $G$ has a majority 2-colouring. If some sequence $v_i,v_{i+1},\dots,v_{i+k}$ contains more than $\frac{k+1}{2}$ vertices of one colour, say red, and $v_i$ is the leftmost red vertex in this sequence, then more than $\frac{k-1}{2}$ out-neighbours of $v_i$ are red, which is not allowed. Thus each sequence $v_i,v_{i+1},\dots,v_{i+k}$ contains exactly $\frac{k+1}{2}$ vertices of each colour. This implies that $v_i$ and $v_{i+k+1}$ receive the same colour, as otherwise the sequence $v_{i+1},\dots,v_{i+k+1}$ would contain more than $\frac{k+1}{2}$ vertices of the colour assigned to $v_{i+k+1}$. For all vertices $v_i$ and $v_j$, if $\ell=\frac{j-i}{k+1}$ in the finite field $\mathbb{Z}_n$, then $j=i+\ell(k+1)$ and $v_i,v_{i+(k+1)},v_{i+2(k+1)},\dots,v_{i+\ell(k+1)}=v_j$ all receive the same colour. Thus all the vertices receive the same colour, which is a contradiction. Hence the claimed 2-colouring does not exist. 

Note that being majority $c$-colourable is not closed under taking induced subgraphs. For example, let $G$ be the digraph with $V(G)=\{a,b,c,d\}$ and $E(G)=\{ab,bc,ca,cd\}$. Then $G$ has a majority 2-colouring: colour $a$ and $c$ by $1$ and colour $b$ and $d$ by $2$. But the subdigraph induced by $\{a,b,c\}$ is a directed 3-cycle, which has no majority 2-colouring.

The remainder of the paper takes a probabilistic approach to \cref{Main}, proving several results that provide evidence for  \cref{Main}. A probabilistic approach is reasonable, since in a random 3-colouring, one would expect that a third of the out-neighbours of each vertex $v$ receive the same colour as $v$. So one might hope that there is enough slack to prove that for \emph{every} vertex $v$, at most half the out-neighbours of $v$ receive the same colour as $v$. \cref{LargeOutdegree} proves \cref{Main} for digraphs with very large minimum outdegree (at least logarithmic in the number of vertcies), and then for digraphs with large minimum outdegree (at least a constant) and not extremely large maximum indegree. \cref{IndependentSets} shows that large minimum outdegree  (at least a constant) is sufficient to prove the existence of one of the colour classes in \cref{Main}. \cref{Generalisation} discusses multi-colour generalisations of \cref{Main}. 

Before proceeding, we mention some related topics in the literature:

\begin{itemize}

\item For undirected graphs, the situation is much simpler. \citet{Lovasz66} proved that for every undirected graph $G$ and integer $k\geq 1$, there is a $k$-colouring of $G$ such that every vertex $v$ has at most $\frac{1}{k}\deg(v)$ neighbours receiving the same colour as $v$. The proof is simple. Consider a $k$-colouring of $G$ that minimises the number of monochromatic edges. Suppose that some vertex $v$ coloured $i$ has greater than $\frac{1}{k} \deg(v)$ neighbours coloured $i$. Thus less than $\frac{k-1}{k}\deg(v)$ neighbours of $v$ are not coloured $i$, and less than $\frac{1}{k}\deg(v)$ neighbours of $v$ receive some colour $j\neq i$. Thus, if $v$ is recoloured $j$, then the number of monochromatic edges decreases. Hence no vertex $v$ has greater than $\frac{1}{k}\deg(v)$ neighbours with the same colour as $v$. 

\item \citet{Seymour74} considered digraph colourings such that every non-sink vertex receives a colour different from some outneighbour, and proved that a strongly-connected digraph $G$ admits a 2-colouring with this property if and only $G$ has an even directed cycle. The proof shows that every digraph has such a 3-colouring, which we repeat here: We may assume that $G$ is strongly connected. In particular, there are no sink vertices. Choose a maximal set $X$ of vertices such that $G[X]$ admits a 3-colouring where every vertex has a colour different from some outneighbour. Since any directed cycle admits such a colouring,  $X\neq\emptyset$. If $X \neq V(G)$, then choose an edge $uv$ entering $X$ and colour $u$ different from the colour of $v$, contradicting the maximality of $X$. So $X=V(G)$. (The same proof show two colours suffice if you start with an even cycle.)\ 

\item \citet{Alon96a,Alon06} posed the following problem: Is there a constant $c$ such that every digraph with minimum outdegree at least $c$ can be vertex-partitioned into two induced digraphs, one with minimum outdegree at least 2, and the other with minimum outdegree at least 1?

\item \citet{Wood-JCTB04}  proved the following edge-colouring variant of majority colourings: For every digraph $G$ and integer $k\geq 2$, there is a partition of $E(G)$ into $k$ acyclic subgraphs such that each vertex $v$ of $G$ has outdegree at most $\ceil{\frac{\deg^+(v)}{k-1}}$ in each subgraph. The bound $\ceil{\frac{\deg^+(v)}{k-1}}$  is best possible, since in each acyclic subgraph at least one vertex has outdegree 0.

\end{itemize}

\section{Large Outdegree}
\label{LargeOutdegree}

We now show that minimum outdegree at least logarithmic  in the number of vertices is sufficient to guarantee a majority 3-colouring. All logarithms are natural.

\begin{thm}
\label{LogarithmicOutdegree}
Every graph  $G$ with $n$ vertices and minimum outdegree $\delta> 72\log(3n)$ has a majority 3-colouring. Moreover, at most half the out-neighbours of each vertex receive the same colour. 
\end{thm}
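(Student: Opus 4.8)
The plan is a straightforward application of the probabilistic method with a union bound, which is affordable precisely because the minimum outdegree is logarithmic in $n$. Colour each vertex of $G$ independently and uniformly at random with a colour from $\{1,2,3\}$. For each vertex $v$, let $A_v$ be the event that strictly more than $\tfrac12\deg^+(v)$ of the out-neighbours of $v$ receive the same colour as $v$. If I can show $\p{\bigcup_{v\in V(G)}A_v}<1$, then some outcome avoids every $A_v$, and that colouring is simultaneously a majority $3$-colouring and a colouring in which at most half the out-neighbours of each vertex share its colour; so the whole statement, ``moreover'' included, reduces to this one probability estimate.

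To bound $\p{A_v}$ I would condition on the colour assigned to $v$. Since $G$ has no loops, the colours of the out-neighbours of $v$ are independent of the colour of $v$, so conditionally the number of out-neighbours of $v$ receiving $v$'s colour is distributed as $\mathrm{Bin}(\deg^+(v),\tfrac13)$, with mean $\tfrac13\deg^+(v)$. The event $A_v$ asks this binomial to exceed $\tfrac12\deg^+(v)=\tfrac32\cdot\tfrac13\deg^+(v)$, i.e.\ to exceed its mean by the fixed factor $\tfrac32$. A standard Chernoff bound then gives
\[
\p{A_v}\;\le\;\exp\!\Big(-\tfrac{1}{36}\deg^+(v)\Big)\;\le\;\exp\!\Big(-\tfrac{1}{36}\delta\Big),
\]
using $\deg^+(v)\ge\delta$ and monotonicity of the exponential; the precise constant is not important, only that it is an absolute positive constant.

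Finally I would take a union bound over the at most $n$ vertices: $\p{\bigcup_{v}A_v}\le n\exp(-\delta/36)$. The hypothesis $\delta>72\log(3n)$ gives $\delta/36>2\log(3n)$, so this is at most $n\cdot(3n)^{-2}=\tfrac1{9n}<1$. Hence a colouring avoiding every $A_v$ exists, which is exactly what is required.

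I do not expect a genuine obstacle here: the only quantitative ingredient is the Chernoff estimate for $\mathrm{Bin}(\deg^+(v),\tfrac13)$ with an explicit exponent, and everything else is bookkeeping. The one point worth flagging is \emph{why} a plain union bound suffices at all — it is the regime $\delta=\Omega(\log n)$ that forces $n\exp(-\Omega(\delta))<1$, whereas the bounded-outdegree results later in the paper will instead have to invoke the Lovász Local Lemma. The generous constant $72$ simply leaves room to absorb whichever form of the Chernoff bound one uses and the harmless passage from $\log n$ to $\log(3n)$.
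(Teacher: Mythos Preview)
Your argument is correct and is exactly the paper's approach: uniform random 3-colouring, a Chernoff-type tail bound on the number of out-neighbours in a given colour class, then a union bound. The one cosmetic difference is in how the bad events are set up. The paper defines $3n$ events $A(v,c)$, one for each vertex $v$ and each colour $c\in\{1,2,3\}$, asking whether more than half of $N^+(v)$ receives colour $c$; it then applies the simple concentration bound (McDiarmid) to get $\p{A(v,c)}\le\exp(-d_v/72)$ and finishes with $3n\exp(-\delta/72)<1$. In other words, the paper reads the ``moreover'' clause as the slightly stronger assertion that \emph{no} colour (not just $v$'s own colour) occupies more than half of any out-neighbourhood, and this is precisely where the $3$ in $\log(3n)$ and the constant $72$ come from. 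Your version, conditioning on $v$'s colour and using $n$ events with exponent $1/36$, proves the statement as you interpreted it with room to spare; replacing your $A_v$ by the $3n$ events $A(v,c)$ recovers the paper's reading with no new idea needed.
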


\begin{proof}
Randomly and independently colour each vertex of $G$ with one of three colours $\{1, 2, 3\}$. Consider a vertex $v$ with out-degree $d_v$. Let $X(v,c)$ be the random variable that counts the number of out-neighbours of $v$ coloured $c$. Of course, $\mathbf{E}(X(v,c)) = d_v/3$. Let $A(v,c)$ be the event that $X(v,c) > d_v/2$. Note that  $X(v,c)$ is determined by $d_v$ independent trials and changing the outcome of any one trial changes $X(v,c)$ by at most 1. By the simple concentration bound\footnote{The simple concentration bound says that if $X$ is a random variable determined by $d$ independent trials, such that changing the outcome of any one trial can affect $X$ by at most $c$, then $\textbf{P}(X >\textbf{E}(X) + t) \leq \exp(-t^2/2c^2d)$; see \citep[Chapter 10]{MR02}. With $\textbf{E}(X_v)=d_v/3$ and $t=d_v/6$ and $c=1$ we obtain the desired upper bound on $\textbf{P}(X_v > d_v/2)$.},
\begin{align*}
\mathbf{P}(A(v, c)) 
 \leq \exp(-(d_v/6)^2/2d_v)
 = \exp(-d_v/72)
\leq \exp(-\delta/72).
\end{align*}
The expected number of events $A(v,c)$ that hold is $$\sum_{v\in V(G)}\sum_{c\in \{1,2,3\}}\mathbf{P}(A(v, c)) \leq 3n  \exp(-\delta/72)<1,$$
where the last inequality holds since $\delta> 72\log(3n)$. Thus there exists colour choices such that no event  $A(v,c)$ holds. That is, a majority 3-colouring exists. 
\end{proof}

The following result shows that large outdegree (at least a constant) and not extremely large indegree is sufficient to guarantee a majority 3-colouring.

\begin{thm}
\label{LocalLemmaColouring}
Every digraph with minimum out-degree  $\delta\geq 1200$ and maximum in-degree at most $\exp(\delta/72)/12\delta$ has a majority 3-colouring. Moreover, at most half the out-neighbours of each vertex receive the same colour. 
\end{thm}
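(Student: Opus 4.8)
The plan is to apply the Lov\'asz Local Lemma to the same random $3$-colouring used in the proof of \cref{LogarithmicOutdegree}. First I would colour each vertex independently and uniformly at random from $\{1,2,3\}$, and for each vertex $v$ and colour $c$ define the bad event $A(v,c)$ that more than half the out-neighbours of $v$ receive colour $c$. As computed in the previous proof, $\p{A(v,c)} \leq \exp(-d_v/72) \leq \exp(-\delta/72) =: p$, where $d_v \geq \delta$ is the out-degree of $v$; the hypothesis $\delta \geq 1200$ is there to make this $p$ small enough to beat the dependency count.

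Next I would set up the dependency graph. The event $A(v,c)$ depends only on the colours of the out-neighbours of $v$, so $A(v,c)$ is mutually independent of all events $A(u,c')$ for which $N^+(u)$ is disjoint from $N^+(v)$. Two out-neighbourhoods $N^+(u)$ and $N^+(v)$ intersect only if there is a vertex $w$ with $u,v \in N^-(w)$; so the number of vertices $u$ with $N^+(u) \cap N^+(v) \neq \emptyset$ is at most $\sum_{w \in N^+(v)} \deg^-(w) \leq d_v \cdot \Delta^-$, where $\Delta^-$ is the maximum in-degree. Counting the three colour choices for each such $u$, each event $A(v,c)$ is mutually independent of all but at most $D := 3 d_v \Delta^-$ other events. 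To control $d_v$ from above I would observe that if $d_v$ is very large then $\p{A(v,c)}$ is correspondingly tiny, so one should really track the product $\p{A(v,c)} \cdot d_v$; but a cleaner route is to delete out-edges so that every out-degree is exactly $\delta$ (this only shrinks out-neighbourhoods and a majority colouring of the reduced digraph is one of the original), after which $D \leq 3\delta\Delta^-$.

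Then I would invoke the symmetric Local Lemma: it suffices that $e\, p\, (D+1) \leq 1$, i.e.\ roughly $e \exp(-\delta/72) \cdot 3\delta\Delta^- \leq 1$, which rearranges to $\Delta^- \leq \exp(-1)\exp(\delta/72)/(3\delta) \cdot (\text{something} > 1)$; the stated bound $\Delta^- \leq \exp(\delta/72)/(12\delta)$ is comfortably inside this since $12 > 3e$. Hence with positive probability no bad event occurs, giving a majority $3$-colouring in which at most half the out-neighbours of each vertex share its colour.

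The main obstacle — really the only subtlety — is handling vertices of large out-degree in the dependency count: naively $D$ grows with the maximum out-degree, which is unbounded, so one must either pass to the weighted/asymmetric Local Lemma or, more simply, prune the digraph down to out-degree exactly $\delta$ as above so that the symmetric form applies cleanly. Everything else (the concentration estimate, the intersection count for the dependency graph, and the final arithmetic check with the constant $1200$) is routine.
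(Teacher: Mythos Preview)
Your identification of the bad events, the concentration bound $\p{A(v,c)}\leq\exp(-d_v/72)$, and the dependency structure (events for $u$ and $v$ are independent unless $N^+(u)\cap N^+(v)\neq\emptyset$, giving at most $3d_v\Delta^-$ dependencies) are all correct and match the paper.  The gap is in your ``cleaner route'': pruning out-edges down to out-degree exactly $\delta$ does \emph{not} reduce the problem.  If $v$ originally has out-degree $d_v$ and you keep only $\delta$ of its out-arcs, a majority colouring of the pruned digraph only guarantees that at most $\delta/2$ of those $\delta$ retained out-neighbours share $v$'s colour; the $d_v-\delta$ discarded out-neighbours are completely uncontrolled and could all share $v$'s colour, so $v$ may have up to $\delta/2+(d_v-\delta)>d_v/2$ same-coloured out-neighbours in the original digraph.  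Majority colouring is monotone in the \emph{other} direction (adding out-edges cannot help), not the one you need.

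This is exactly the obstacle you named, and your first suggestion---tracking $\p{A(v,c)}\cdot d_v$ via an asymmetric or weighted Local Lemma---is the route the paper actually takes.  They assign each event $A(v,c)$ a weight $t_v=d_v/\delta\geq 1$, note that $\p{A(v,c)}\leq p^{t_v}$ with $p=\exp(-\delta/72)$, and verify the weighted Local Lemma condition $\sum_{A(w,c')\in D(v,c)}(2p)^{t_w}\leq t_v/2$: the larger dependency count $3d_v\Delta^-$ for high-degree $v$ is absorbed by the larger weight $t_v$, while each summand is bounded by $(2p)^1$ since $t_w\geq 1$.  The arithmetic with the constants $1200$ and $12\delta$ then goes through.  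So your outline is right except for the one shortcut that doesn't work; replacing it with the weighted form, as you yourself hint, gives the paper's proof.
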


\begin{proof}
We assume $\delta\geq 1200$, as otherwise the minimum out-degree $\delta$ is greater than the maximum in-degree $\exp(\delta/72)/12\delta$, which does not make sense.
 
 We use the following weighted version of the Local Lemma \citep{EL75,MR02}: Let $\mathcal{A} := \{A_1, \dots , A_n\}$ be a set of `bad' events, such that each $A_i$ is mutually independent of $\mathcal{A}\setminus (D_i \cup \{A_i\})$, for some subset $D_i \subseteq  A$. Assume there are numbers $t_1,\dots,t_n \geq 1$ and a real number $p \in [0, \frac14]$ such that for $1 \leq  i \leq  n$,
$$(a)\; \mathbf{P}(A_i) \leq  p^{t_i} \quad \text{and} \quad  (b)\; \sum_{A_j\in \mathcal{D}_i} (2p)^{t_j} \leq  t_i/2.$$ 
Then with positive probability no event $A_i$ occurs.

Define $p := \exp(-\delta /72)$. Since $\delta \geq1200$ we have $p\in [0, \frac14]$. Randomly and independently colour each vertex of $G$ with one of three colours $\{1, 2, 3\}$. Consider a vertex $v$ with out-degree $d_v$. Let $X(v,c)$ be the random variable that counts the number of out-neighbours of $v$ coloured $c$. Of course, $\mathbf{E}(X(v,c)) = d_v/3$. Let $A(v,c)$ be the event that $X(v,c) > d_v/2$. Let $\mathcal{A} := \{A(v,c) : v \in  V(G),c \in  \{1,2,3\}\}$  be our set of events. Let $t(v,c) := t_v := d_v/\delta$  be the associated weight. Then $t_v \geq  1$. It suffices to prove that conditions (a) and (b) hold. 

Note that  $X(v,c)$ is determined by $d_v$ independent trials and changing the outcome of any one trial changes $X(v,c)$ by at most 1. By the simple concentration bound,
\begin{align*}
\mathbf{P}(A(v, c)) 
 \leq \exp(-(d_v/6)^2/2d_v)
 = \exp(-d_v/72)
 = \exp(-\delta t_v/72)
 = p^{t_v} .
\end{align*}
Thus condition (a) is satisfied.
For each event $A(v,c)$ let $D(v,c)$ be the set of all events $A(w,c') \in \mathcal{A}$ such that $v$
and $w$ have a common out-neighbour. Then $A(v, c)$ is mutually independent of $\mathcal{A} \setminus (D(v, c) \cup  \{A(v, c)\})$. Since $t_w \geq  1$,
\begin{align*}
\sum_{A(w,c')\in D(v,c)}\!\!\!\!\!\!\!\!  (2p)^{t_w} 
 \leq  \sum_{A(w,c')\in D(v,c)}\!\!\!\!\!\!\!\! (2p)^1 
 =  2p |D(v, c)|.
 \end{align*}
Since each out-neighbour of $v$ has in-degree at most $\exp(\delta /72)/12\delta$, we have $|D(v,c)| \leq  d_v\exp(\delta /72)/4\delta$ and 
\begin{align*}
\sum_{A(w,c')\in D(v,c)}  (2p)^{t_w} 
 \leq p d_v \exp(\delta/72)/2\delta
 =  \exp(-\delta/72) t_v \exp(\delta/72)/2
 =  t_v/2.
 \end{align*}
Thus condition (b) is satisfied. By the local lemma, with positive probability, no event $A(v,c)$ occurs. That is, a majority 3-colouring exists. 
\end{proof}
 
Note that the conclusion in \cref{LogarithmicOutdegree} and  \cref{LocalLemmaColouring} is stronger than in Conjecture~\ref{Main}. We now show that such a conclusion is impossible (without some extra degree assumption). 

\begin{lem}
For all integers $k$ and $\delta$, there are infinitely many digraphs $G$ with minimum outdegree $\delta$, such that for every vertex $k$-colouring of $G$, there is a vertex $v$ such that all the out-neighbours of $v$ receive the same colour. 
\end{lem}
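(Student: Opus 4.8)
The plan is to realise, for every sufficiently large integer $n$, every $\delta$-element set of vertices as an out-neighbourhood. Let $A=\{a_1,\dots,a_n\}$ and let $B=\{v_T : T\subseteq A,\ |T|=\delta\}$ contain one vertex for each $\delta$-element subset of $A$. Build a digraph $G_n$ on vertex set $A\cup B$ by setting $N^+_{G_n}(v_T):=T$ for each such $T$, and by giving each $a\in A$ as out-neighbourhood any $\delta$ distinct vertices $v_T\in B$ with $a\notin T$ (which exists once $\binom{n-1}{\delta}\geq\delta$, hence for all large $n$, and which avoids creating $2$-cycles). Every vertex of $G_n$ then has out-degree exactly $\delta$, and $G_n$ has $n+\binom{n}{\delta}$ vertices, so the $G_n$ are pairwise non-isomorphic and form an infinite family of digraphs with minimum out-degree $\delta$.

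The colouring argument is then a single application of the pigeonhole principle. Fix $n\geq k\delta$ and any vertex $k$-colouring of $G_n$. Some colour class contains at least $\ceil{n/k}\geq\delta$ vertices of $A$, so there is a $\delta$-element set $T\subseteq A$ all of whose vertices receive the same colour; the vertex $v_T\in B$ then has all of its out-neighbours, namely the vertices of $T$, coloured alike, as required.

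I do not expect a genuine difficulty here. The one point worth getting right is that the lemma demands \emph{minimum} out-degree $\delta$: the bare gadget on $A\cup B$ leaves the vertices of $A$ as sinks, so one must add the back-edges from $A$ into $B$ (or, alternatively, iterate the construction over more levels), and then observe that these extra edges are harmless, since the conclusion only asserts the existence of \emph{some} vertex whose out-neighbourhood is monochromatic, and adding edges elsewhere cannot destroy this. The degenerate cases $\delta=0$ (take any digraph with a sink) and $k=1$ (trivial) can be disposed of separately or simply excluded.
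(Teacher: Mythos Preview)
Your proof is correct and follows essentially the same approach as the paper: build a ``universal'' layer containing a vertex for every $\delta$-subset of a base set of at least $k\delta$ vertices, then apply pigeonhole. The only cosmetic difference is how the minimum out-degree on the base is secured---the paper starts from an arbitrary digraph $G_0$ that already has minimum out-degree $\delta$ and at least $k\delta$ vertices, whereas you take a bare set $A$ and add back-edges into $B$; both work, and your extra care about avoiding $2$-cycles is unnecessary but harmless.
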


\begin{proof}
Start with a digraph $G_0$ with  at least $k\delta$ vertices and minimum outdegree $\delta$. For each set $S$ of $\delta$ vertices in $G_0$, add a new vertex with out-neighbourhood $S$. Let $G$ be the digraph obtained. In every $k$-colouring of $G$, at least $\delta$ vertices in $G_0$ receive the same colour, which implies that for some vertex 
$v\in V(G)\setminus V(G_0)$, all the out-neighbours of $v$ receive the same colour. 
\end{proof}

\section{Stable Sets}
\label{IndependentSets}

A set $T$ of vertices in a digraph $G$ is a \emph{stable set} if for each vertex $v\in T$, at most half the out-neighbours of $v$ are also in $T$. A majority colouring is a partition into stable sets. Of course, if a digraph has a majority 3-colouring, then it contains a stable set with at least one third of the vertices. The next lemma provides a sufficient condition for the existence of such a set.

\begin{thm}
\label{IndSet}
Every digraph $G$ with $n$ vertices and minimum outdegree at least $22$ has a stable set with at least $\frac{n}{3}$ vertices.
\end{thm}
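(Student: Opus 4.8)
The plan is to use the alteration (deletion) method: take a random subset of the vertices and then discard the ones that spoil stability. Concretely, I would include each vertex of $G$ in a set $T$ independently with probability $p$, where $p\in(\tfrac13,\tfrac12)$ is a constant to be chosen at the end (I expect $p=\tfrac25$ to work). Call a vertex $v\in T$ \emph{bad} if more than half of its out-neighbours lie in $T$, let $B$ be the set of bad vertices, and put $T':=T\setminus B$. The key point is that $T'$ is a stable set whatever the random outcome: deleting vertices from $T$ only decreases, for each surviving vertex, the number of its out-neighbours in the set, and any $v\in T'$ was not bad, so at most half of its out-neighbours lie in $T\supseteq T'$. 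Hence it suffices to prove that $\mathbf{E}(|T'|)\geq n/3$.

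For a vertex $v$ of out-degree $d_v$, the events ``$v\in T$'' and ``at most half of $N^+(v)$ lies in $T$'' depend on disjoint sets of coins (the coin at $v$ and the coins at $N^+(v)$ respectively), so they are independent; therefore $\mathbf{P}(v\in T')=p\bigl(1-\mathbf{P}(\mathrm{Bin}(d_v,p)>d_v/2)\bigr)$. Summing over all $v$ and using $d_v\geq 22$, it is enough to establish the single-variable inequality $\mathbf{P}(\mathrm{Bin}(d,p)>d/2)\leq 1-\tfrac1{3p}$ for every integer $d\geq 22$.

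A short comparison of consecutive values of $d$ shows that, for $p<\tfrac12$, the quantity $\mathbf{P}(\mathrm{Bin}(d,p)>d/2)$ is maximised over $d\geq 22$ at $d=23$, so the whole proof reduces to verifying $\mathbf{P}(\mathrm{Bin}(23,p)\geq12)\leq 1-\tfrac1{3p}$ for a suitable $p$. This last estimate is the step I expect to be the main obstacle: the slack is tiny — with $p=\tfrac25$ one needs the tail to be at most $\tfrac16\approx0.1667$, whereas it is in fact $\approx0.163$ — so a crude Chernoff or relative-entropy bound is not sharp enough, and one must estimate $\mathbf{P}(\mathrm{Bin}(23,p)\geq12)$ accurately, for instance by bounding the binomial tail from $k=12$ onward by a geometric series of ratio $\tfrac{(23-k)p}{(k+1)(1-p)}$, or simply by exact evaluation. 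It is also at this point that the hypothesis of minimum out-degree at least $22$ is forced: the corresponding inequality at $d=21$ fails for every choice of $p$, so this method cannot push the bound below $22$.
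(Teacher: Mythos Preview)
Your approach is correct and essentially the same as the paper's: both use the alteration method (include each vertex independently with probability $p$, delete the ``bad'' vertices, and use independence of the coin at $v$ from the coins at $N^+(v)$) to reduce to the binomial tail inequality $\mathbf{P}(\mathrm{Bin}(d,p)>d/2)\leq 1-\tfrac{1}{3p}$ for all $d\geq 22$. Your monotonicity reduction to the single case $d=23$ (the tail at odd $d$ dominates the adjacent even values, and the odd-$d$ tails are strictly decreasing for $p<\tfrac12$) is tidier than the paper's treatment, which takes $p=0.38$, verifies $22\leq d\leq 128$ by direct computation of \eqref{PXv}, and invokes Chernoff for $d\geq 129$.
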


\cref{IndSet} is proved via the following more general lemma.

\begin{lem}
\label{GenIndSet}
For $0<\alpha<p<\beta<1$, every digraph $G$ with minimum outdegree at least 
$$\delta:=\CEIL{ \frac{(\beta+p) \log\left(\frac{p}{p-\alpha}\right)}{(\beta-p)^2 } }$$ 
contains a set $T$ of at least $\alpha n$ vertices, such that $|N^+_G(v)\cap T| \leq \beta |N^+_G(v)|$  for every vertex $v\in T$. 
\end{lem}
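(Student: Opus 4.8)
The plan is a simple alteration (``delete the bad vertices'') argument. Include each vertex of $G$ in a random set $S$ independently with probability $p$, and write $d_v:=|N^+_G(v)|\geq\delta$. Call a vertex $v$ \emph{bad} if $v\in S$ and $|N^+_G(v)\cap S|>\beta d_v$; let $B$ be the set of bad vertices and set $T:=S\setminus B$. If $v\in T$ then $v\notin B$, so $|N^+_G(v)\cap S|\leq\beta d_v$, and since $T\subseteq S$ we get $|N^+_G(v)\cap T|\leq\beta d_v=\beta|N^+_G(v)|$; thus any outcome with $|T|\geq\alpha n$ yields a set as required. So it suffices to prove $\mathbf{E}(|T|)\geq\alpha n$, and since $\mathbf{E}(|T|)=\mathbf{E}(|S|)-\mathbf{E}(|B|)=pn-\mathbf{E}(|B|)$, it suffices to prove $\mathbf{P}(v\text{ is bad})\leq p-\alpha$ for every vertex $v$.

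To estimate $\mathbf{P}(v\text{ is bad})$ the point is to use the \emph{two} independent sources of randomness separately: whether $v\in S$ is independent of which out-neighbours of $v$ lie in $S$ (here we use that $G$ has no loops, so $v\notin N^+_G(v)$), hence
$$\mathbf{P}(v\text{ is bad})=p\cdot\mathbf{P}\big(|N^+_G(v)\cap S|>\beta d_v\big).$$
Now $|N^+_G(v)\cap S|$ is a sum of $d_v$ independent Bernoulli$(p)$ random variables with mean $pd_v$, and $\beta d_v=(1+\varepsilon)pd_v$ where $\varepsilon:=\beta/p-1>0$. The multiplicative Chernoff bound $\mathbf{P}(X\geq(1+\varepsilon)\mathbf{E}(X))\leq\exp(-\varepsilon^2\mathbf{E}(X)/(2+\varepsilon))$ then gives, after simplifying $\varepsilon^2 pd_v/(2+\varepsilon)=(\beta-p)^2d_v/(\beta+p)$,
$$\mathbf{P}\big(|N^+_G(v)\cap S|>\beta d_v\big)\leq\exp\!\Big(-\tfrac{(\beta-p)^2 d_v}{\beta+p}\Big)\leq\exp\!\Big(-\tfrac{(\beta-p)^2\delta}{\beta+p}\Big),$$
the last step because the bound is decreasing in $d_v\geq\delta$.

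Finally substitute the value of $\delta$. By definition $\delta\geq(\beta+p)\log\!\big(p/(p-\alpha)\big)/(\beta-p)^2$, so $(\beta-p)^2\delta/(\beta+p)\geq\log\!\big(p/(p-\alpha)\big)$ and the right-hand side above is at most $(p-\alpha)/p$. Therefore $\mathbf{P}(v\text{ is bad})\leq p\cdot(p-\alpha)/p=p-\alpha$, whence $\mathbf{E}(|T|)\geq pn-(p-\alpha)n=\alpha n$, and since the probability space is finite some outcome achieves $|T|\geq\alpha n$.

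I do not expect a genuine difficulty, but two points need care. First, the bad event must be the conjunction ``$v\in S$ \emph{and} $v$ has too many out-neighbours in $S$'': without the factor $p$ this buys, the inequality $\mathbf{E}(|T|)\geq\alpha n$ is simply false (one would only get $pn-n\cdot\mathbf{P}(|N^+_G(v)\cap S|>\beta d_v)$, which is below $\alpha n$). Second, one must use a sharp enough tail bound: the Hoeffding-type ``simple concentration bound'' used elsewhere in the paper would only give exponent $(\beta-p)^2 d_v/2$ and hence a worse constant, whereas the multiplicative Chernoff bound produces exactly the exponent $(\beta-p)^2 d_v/(\beta+p)$ matching the stated $\delta$. (If loops are allowed, a loop at $v$ contributes $1$ deterministically to $|N^+_G(v)\cap S|$ when $v\in S$, which only shifts the relevant mean down and helps; so one may assume $G$ is loopless.)
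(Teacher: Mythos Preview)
Your proof is correct and follows essentially the same alteration argument as the paper: random selection into $S$ with probability $p$, deletion of bad vertices, the same multiplicative Chernoff bound yielding exponent $(\beta-p)^2 d_v/(\beta+p)$, and the same use of the independence of $\{v\in S\}$ from $\{X_v>\beta d_v\}$ to gain the crucial factor of $p$. Your added remarks about why the factor of $p$ is indispensable and why the multiplicative Chernoff bound (rather than the simple concentration bound) is needed to match the stated $\delta$ are accurate and useful.
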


\begin{proof}
Let $d_v:=|N^+_G(v)|$ be the outdegree of each vertex $v$ of $G$. Initialise $S:=\emptyset$. For each vertex $v$ of $G$, add $v$ to $S$ independently and randomly with probability $p$. Let $X_v:=|N^+_G(v)\cap S|$. 
Note that $X_v\sim\text{Bin}(d_v,p)$ and 
\begin{equation}
\label{PXv}
\mathbf{P}(X_v > \beta d_v) = \sum_{k\geq\floor{\beta d_v}+1}^{d_v} \binom{d_v}{k} p^k (1-p)^{d_v-k}.
\end{equation}
By the Chernoff bound\footnote{The Chernoff bound implies that if $X\sim\text{Bin}(d,p)$ then $\textbf{P}(X\geq (1+\epsilon)pd) \leq \exp( - \frac{\epsilon^2}{2+\epsilon}\, pd)$ for $\epsilon\geq 0$. With $\epsilon=\frac{\beta}{p}-1$ we have $\textbf{P}(X> \beta d) \leq \exp( - \frac{(\beta-p)^2}{p+\beta}\, d)$.},
\begin{equation}
\label{PXvdesired}
\mathbf{P}(X_v > \beta d_v) \leq \exp\left( - \frac{(\beta-p)^2}{\beta+p} d_v  \right) 
\leq \exp\left( - \frac{(\beta-p)^2}{\beta+p} \delta  \right)
\leq \frac{p-\alpha}{p}. 
\end{equation}
where the last inequality follows from the  definition of $\delta$.  
Let $B:=\{v\in S:X_v > \beta d_v\}$. Then 
\begin{align*}
\mathbf{E}(|B|) 
 = \sum_{v\in V(G)} \mathbf{P}( v\in S \text{ and }X_v > \beta d_v) .
\end{align*}
Since the events $v\in S$ and $X_v > \beta d_v$ are independent,
\begin{align*}
\mathbf{E}(|B|) 
 = \sum_{v\in V(G)} \mathbf{P}( v\in S ) \,\mathbf{P}(X_v > \beta d_v) 
 = p \sum_{v\in V(G)} \mathbf{P}(X_v > \beta d_v) 
 \leq (p-\alpha)n.
\end{align*}
Let $T:=S\setminus B$. Thus $|N^+_G(v)\cap T| \leq \beta d_v$ for each vertex $v\in T$, as desired.
By the linearity of expectation, 
$$\mathbf{E}(|T|)=\mathbf{E}(|S|)-\mathbf{E}(|B|)=pn-\mathbf{E}(|B|) \geq \alpha n.$$
Thus there exists the desired set $T$. 
\end{proof}

\begin{proof}[Proof of \cref{IndSet}] The proof follows that of \cref{GenIndSet} with one change. Let $\alpha:=\frac13$ and $\beta:=\frac12$ and $p:=0.38$. Then $\delta=129$. If $22\leq d_v\leq 128$ then direct calculation of the formula in \eqref{PXv} verifies that $\mathbf{P}(X_v > \beta d_v) \leq  \frac{p-\alpha}{p}$, as in \eqref{PXvdesired}. For $d_v\geq 129$ the Chernoff bound proves \eqref{PXvdesired}. The rest of the proof is the same as in \cref{GenIndSet}.
\end{proof}


Note the following corollary of \cref{GenIndSet} obtained with $\alpha=\frac12-\epsilon$ and $p=\frac12-\frac{\epsilon}{2}$. This says that graphs with large minimum outdegree have a stable set with close to half the vertices.

\begin{prop}
For $0<\epsilon<\frac12$, every $n$-vertex digraph $G$ with minimum outdegree at least 
$2\epsilon^{-2}(2-\epsilon) \log(\tfrac{1-\epsilon}{\epsilon})$ 
contains a stable set of at least $(\frac12-\epsilon)n$ vertices.
\end{prop}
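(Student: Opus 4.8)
The plan is to derive this as a direct specialisation of \cref{GenIndSet}. I set $\alpha:=\tfrac12-\epsilon$ and $p:=\tfrac12-\tfrac{\epsilon}{2}$ and $\beta:=\tfrac12$, and check first that these satisfy the hypothesis $0<\alpha<p<\beta<1$ of the lemma: indeed $\tfrac12-\epsilon<\tfrac12-\tfrac{\epsilon}{2}<\tfrac12<1$ because $0<\epsilon<\tfrac12$. Then \cref{GenIndSet} yields a set $T$ with $|T|\geq\alpha n=(\tfrac12-\epsilon)n$ such that $|N^+_G(v)\cap T|\leq\beta|N^+_G(v)|=\tfrac12|N^+_G(v)|$ for every $v\in T$; the latter inequality is precisely the statement that $T$ is a stable set. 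So the whole content of the proposition is the arithmetic simplification of the outdegree bound $\delta$ from \cref{GenIndSet} under this substitution.

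Next I would substitute into the formula
$$\delta=\CEIL{\frac{(\beta+p)\log\!\left(\frac{p}{p-\alpha}\right)}{(\beta-p)^2}}.$$
With the chosen values, $\beta-p=\tfrac{\epsilon}{2}$, so $(\beta-p)^2=\tfrac{\epsilon^2}{4}$; $\beta+p=1-\tfrac{\epsilon}{2}=\tfrac{2-\epsilon}{2}$; and $p-\alpha=\tfrac{\epsilon}{2}$ while $p=\tfrac{1-\epsilon}{2}$, so $\frac{p}{p-\alpha}=\frac{1-\epsilon}{\epsilon}$. Plugging in,
$$\delta=\CEIL{\frac{\tfrac{2-\epsilon}{2}\,\log\!\left(\tfrac{1-\epsilon}{\epsilon}\right)}{\tfrac{\epsilon^2}{4}}}=\CEIL{\frac{2(2-\epsilon)\log\!\left(\tfrac{1-\epsilon}{\epsilon}\right)}{\epsilon^2}}=\CEIL{2\epsilon^{-2}(2-\epsilon)\log\!\left(\tfrac{1-\epsilon}{\epsilon}\right)}.$$
This is exactly the quantity in the statement (up to the harmless ceiling, which only makes the hypothesis weaker), so any digraph with minimum outdegree at least $2\epsilon^{-2}(2-\epsilon)\log(\tfrac{1-\epsilon}{\epsilon})$ a fortiori has minimum outdegree at least this $\delta$, and \cref{GenIndSet} applies.

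There is no real obstacle here — the proposition is a corollary, and the only thing to be careful about is the direction of the inequalities when passing from the real-valued bound in the statement to the integer $\delta$ of \cref{GenIndSet}, and confirming $p>\alpha$ so that $\log(p/(p-\alpha))$ is well-defined and positive (which holds since $\epsilon>0$). I would simply write: ``Apply \cref{GenIndSet} with $\alpha=\tfrac12-\epsilon$, $p=\tfrac12-\tfrac{\epsilon}{2}$, $\beta=\tfrac12$; the displayed bound on $\delta$ simplifies to $2\epsilon^{-2}(2-\epsilon)\log(\tfrac{1-\epsilon}{\epsilon})$, and a set $T$ with $|N^+_G(v)\cap T|\leq\tfrac12|N^+_G(v)|$ for all $v\in T$ is precisely a stable set.'' If one wanted the statement to be fully self-contained one could also remark that $\beta=\tfrac12$ is the only choice that makes the conclusion a statement about stable sets, which is why it is singled out among the corollaries of \cref{GenIndSet}.
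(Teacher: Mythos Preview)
Your proposal is correct and matches the paper's approach exactly: the paper simply presents this proposition as a corollary of \cref{GenIndSet} with the very same parameter choices $\alpha=\tfrac12-\epsilon$ and $p=\tfrac12-\tfrac{\epsilon}{2}$ (and implicitly $\beta=\tfrac12$), and your arithmetic verification of the resulting bound is accurate. One tiny quibble: the ceiling does not make the lemma's hypothesis \emph{weaker}; rather, since outdegrees are integers, minimum outdegree $\geq x$ is equivalent to minimum outdegree $\geq\lceil x\rceil$, which is the point you correctly use in the following sentence.
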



%

%
%

\section{Multi-Colour Generalisation}
\label{Generalisation}

The following natural generalisation of Conjecture~\ref{Main} arises. 


\begin{conj}
\label{kColours}
For $k\geq 2$, every digraph has a vertex $(k+1)$-colouring such that for each vertex $v$, at most $\frac{1}{k}\deg^+(v)$ out-neighbours of $v$ receive the same colour as $v$. 
\end{conj}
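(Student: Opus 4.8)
The plan is to adapt the proof of \cref{LogarithmicOutdegree}, since \cref{kColours} is precisely the $k$-fold generalisation of \cref{Main} in the high-outdegree regime, and then to comment on what is and is not known in general. First I would observe that \cref{kColours} is trivially true for digraphs with bounded outdegree when we do not insist on a fixed number of colours, and is best possible in the sense that $k$ colours do not suffice: an odd directed cycle needs $3$ colours for $k=2$, and the construction with $N^+_G(v_i)=\{v_{i+1},\dots,v_{i+k'}\}$ generalises to show $k$ colours are insufficient. So the interesting content is whether $k+1$ colours always work.

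For the high-outdegree case I would run the first-moment argument. Randomly and independently colour each vertex with one of $k+1$ colours. For a vertex $v$ of outdegree $d_v$ and a colour $c$, let $X(v,c)$ count the out-neighbours of $v$ coloured $c$, so $\e[X(v,c)] = d_v/(k+1)$. The bad event $A(v,c)$ is that $X(v,c) > d_v/k$. Since $X(v,c)$ is a function of $d_v$ independent trials each affecting it by at most $1$, the simple concentration bound gives
\begin{align*}
\p{A(v,c)} \leq \exp\!\left( - \frac{(d_v/k - d_v/(k+1))^2}{2 d_v} \right) = \exp\!\left( - \frac{d_v}{2 k^2 (k+1)^2} \right).
\end{align*}
Taking a union bound over the at most $(k+1)n$ pairs $(v,c)$, this is less than $1$ as soon as the minimum outdegree exceeds $2k^2(k+1)^2 \log((k+1)n)$, so a colouring avoiding every $A(v,c)$ exists. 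This proves \cref{kColours} for digraphs of minimum outdegree at least logarithmic in $n$, with the constant depending only on $k$; I would also note that the Local Lemma version of \cref{LocalLemmaColouring} carries over verbatim to give a constant-outdegree, bounded-indegree statement, using weights $t_v := d_v/\delta$ and $p := \exp(-\Theta_k(\delta))$.

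The hard part — indeed the part I would not expect to resolve — is the conjecture in full generality, with no degree hypothesis. Already the case $k=2$ is \cref{Main}, which is open, so \cref{kColours} is strictly harder and I would present it only as a conjecture, not prove it. What I can add for free is the reduction in the spirit of \cref{IndSet}: the argument of \cref{GenIndSet} with $\alpha$ near $\frac1{k}$, $\beta = \frac1{k}$, and $p$ chosen slightly above $\frac1{k}$ shows that a digraph with sufficiently large (constant, depending on $k$) minimum outdegree has a stable-type set $T$ with $|N^+_G(v) \cap T| \leq \frac1{k}|N^+_G(v)|$ for all $v \in T$ and $|T| \geq \frac{n}{k}(1-o(1))$, which produces one colour class. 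The main obstacle throughout is that, exactly as in the three-colour case, there is no apparent way to iterate the existence of a single good colour class into a full partition, and no analogue of the Lovász recolouring argument for undirected graphs is available because majority colourability is not closed under induced subgraphs.
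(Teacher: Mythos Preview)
The statement is a conjecture, and the paper does not prove it; you correctly recognise this and explicitly present it as open (noting that already the case $k=2$ is \cref{Main}). Your surrounding partial results --- the high-outdegree first-moment argument, the Local Lemma variant, and the stable-set version via \cref{GenIndSet} --- are in the same spirit as the paper's treatment in \cref{Generalisation}, though the paper additionally records the unconditional $k^2$-colour upper bound coming from the proof of \cref{4Colouring}, which you do not mention. One small slip: in applying \cref{GenIndSet} you need $\alpha<p<\beta=\tfrac{1}{k}$, so $p$ must be taken slightly \emph{below} $\tfrac{1}{k}$ (the paper uses $p=\tfrac{1}{k}-\tfrac{\epsilon}{2}$), not above.
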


The proof of \cref{4Colouring} generalises to give an upper bound  of $k^2$ on the number of colours in \cref{kColours}. It is open whether the number of colours is $O(k)$. This conjecture would be best possible, as shown by the following example. Let $G$ be the $k$-th power of an $n$-cycle, with arcs oriented clockwise, where $n\geq 2k+3$ and $n\not\equiv 0\pmod{k+1}$. Each vertex has outdegree $k$. Say $G$ has a vertex $(k+1)$-colouring such that for each vertex $v$, at most $\epsilon k$ out-neighbours of $v$ receive the same colour as $v$. If $\epsilon k<1$ then the  underlying undirected graph of $G$ is properly coloured, which is only possible if $n\equiv 0\pmod{k+1}$. Hence $\epsilon\geq\frac{1}{k}$. 

%

%

\cref{GenIndSet} with $\alpha=\frac{1}{k}-\epsilon$ and $\beta=\frac{1}{k}$ and $p=\frac{1}{k}-\frac{\epsilon}{2}$ implies the following `stable set' version of  \cref{kColours} for digraphs with large minimum outdegree. 

\begin{prop}
\label{kIndSet}
For $k\geq 2$ and $\epsilon\in(0,\frac{1}{k})$, every $n$-vertex digraph $G$ with minimum outdegree at least 
$2\epsilon^{-2} (\tfrac{4}{k}-\epsilon) \log\left(\tfrac{2}{\epsilon k}-1\right)$
contains a set $T$ of at least $(\frac{1}{k}-\epsilon)n$ vertices, such that  for every vertex $v\in T$, at most $\frac{1}{k}\deg^+(v)$ out-neighbours of $v$ are also in $T$.
\end{prop}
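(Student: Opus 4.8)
The plan is to apply \cref{GenIndSet} directly with the stated substitution $\alpha = \tfrac1k - \epsilon$, $\beta = \tfrac1k$, and $p = \tfrac1k - \tfrac{\epsilon}{2}$, so the only real work is to verify that these values satisfy the hypothesis $0 < \alpha < p < \beta < 1$ and then to simplify the resulting expression for $\delta$ to the form claimed in the proposition. First I would check the ordering constraints: since $\epsilon \in (0, \tfrac1k)$ we have $\alpha = \tfrac1k - \epsilon > 0$, clearly $\alpha < p < \beta$ because $0 < \tfrac{\epsilon}{2} < \epsilon$, and $\beta = \tfrac1k \leq \tfrac12 < 1$ since $k \geq 2$; so the hypotheses of \cref{GenIndSet} hold.

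Next I would substitute into the ceiling expression
$$\delta = \CEIL{ \frac{(\beta + p)\log\!\left(\frac{p}{p-\alpha}\right)}{(\beta - p)^2} }.$$
Here $\beta - p = \tfrac{\epsilon}{2}$, so $(\beta-p)^2 = \tfrac{\epsilon^2}{4}$ and $\frac{1}{(\beta-p)^2} = 4\epsilon^{-2}$. Also $\beta + p = \tfrac2k - \tfrac{\epsilon}{2}$, and $p - \alpha = \tfrac{\epsilon}{2}$, so $\frac{p}{p-\alpha} = \frac{\tfrac1k - \tfrac{\epsilon}{2}}{\tfrac{\epsilon}{2}} = \frac{2/k - \epsilon}{\epsilon} = \tfrac{2}{\epsilon k} - 1$. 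Putting these together,
$$\delta = \CEIL{ 4\epsilon^{-2}\left(\tfrac{2}{k} - \tfrac{\epsilon}{2}\right)\log\!\left(\tfrac{2}{\epsilon k} - 1\right) } = \CEIL{ 2\epsilon^{-2}\left(\tfrac{4}{k} - \epsilon\right)\log\!\left(\tfrac{2}{\epsilon k} - 1\right) },$$
which is exactly (the ceiling of) the quantity in the statement. Since increasing the minimum outdegree only makes \cref{GenIndSet} easier to apply, any digraph with minimum outdegree at least $2\epsilon^{-2}(\tfrac4k - \epsilon)\log(\tfrac{2}{\epsilon k} - 1)$ in particular has minimum outdegree at least $\delta$, and \cref{GenIndSet} yields a set $T$ of at least $\alpha n = (\tfrac1k - \epsilon)n$ vertices with $|N^+_G(v) \cap T| \leq \beta |N^+_G(v)| = \tfrac1k \deg^+(v)$ for every $v \in T$, as required.

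There is essentially no obstacle here: this is a corollary obtained by plugging particular values into the already-proven \cref{GenIndSet}, and the only thing to watch is the arithmetic simplification of $\beta + p$, $\beta - p$, and $p - \alpha$, together with the observation that one may round the explicit outdegree bound up to the ceiling without loss. The one point worth stating carefully is that the hypothesis $\epsilon < \tfrac1k$ (rather than merely $\epsilon < 1$) is what guarantees both $\alpha > 0$ and that the argument $\tfrac{2}{\epsilon k} - 1$ of the logarithm is positive, so the bound is well-defined.
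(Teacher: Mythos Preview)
Your proof is correct and follows exactly the paper's approach: the proposition is stated as an immediate consequence of \cref{GenIndSet} with $\alpha=\tfrac1k-\epsilon$, $\beta=\tfrac1k$, $p=\tfrac1k-\tfrac{\epsilon}{2}$, and you have simply filled in the routine verification of the hypotheses and the arithmetic simplification of $\delta$. The only point worth making explicit is that the passage from the real-valued bound in the statement to the ceiling $\delta$ in \cref{GenIndSet} uses that the minimum outdegree is an integer, so ``at least $x$'' implies ``at least $\lceil x\rceil$''.
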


\section{Open Problems}

In addition to resolving \cref{Main}, the following open problems arise from this paper:

\begin{enumerate}

\item Is there a constant $\beta<1$ for which every digraph has a 3-colouring, such that for every vertex $v$, at most $\beta\deg^+(v)$ out-neighbours receive the same colour as $v$?

\item Does every tournament have a majority 3-colouring?

\item Does every Eulerian digraph have a majority 3-colouring? Note that for an Eulerian digraph $G$, if each vertex $v$ has in-degree and out-degree $\deg(v)$, then by the result for undirected graphs mentioned in \cref{Intro}, the underlying undirected graph of $G$ has a 4-colouring such that each vertex $v$ has at most $\frac{1}{2}\deg(v)$ in- or- out-neighbours with the same colour as $v$. In particular, $G$ has a majority 4-colouring. By an analogous argument every Eulerian digraph has a 3-colouring such that each vertex $v$ has at most $\frac{2}{3}\deg(v)$ in- or- out-neighbours with the same colour as $v$, thus proving a special case of the first question above. 

\item Does every digraph in which every vertex has in-degree and out-degree $k$ have a majority 3-colouring? A variant of \cref{LocalLemmaColouring} proves this result for $k\geq 144$. 

\item Is there a characterisation of digraphs that have a majority 2-colouring (or a polynomial time algorithm to recognise such digraphs)?


\item Does every digraph have a $O(k)$-colouring such that for each vertex $v$, at most $\frac{1}{k}\deg^+(v)$ out-neighbours receive the same colour as $v$ (for all $k\geq 2$)?

\item A digraph $G$ is \emph{majority $c$-choosable} if for every function $L:V(G)\rightarrow\mathbb{Z}$ with $|L(v)|\geq c$ for each vertex $v\in V(G)$, there is a majority colouring of $G$ with each vertex $v$ coloured from $L(v)$. Is every digraph majority $c$-choosable for some constant $c$? The proof of \cref{4Colouring} shows that acyclic digraphs are majority 2-choosable, and obviously \cref{LogarithmicOutdegree} and \cref{LocalLemmaColouring} extend to the setting of choosability. 

\item Consider the following fractional setting. Let $S(G)$ be the set of all stable sets of a digraph $G$. Let $S(G,v)$ be the set of all stable sets containing $v$. A \emph{fractional majority colouring} is a function that assigns each stable set $T\in S(G)$ a weight $x_T\geq 0$ such that  $\sum_{T\in S(G,v)}x_T\geq 1$ for each vertex $v$ of $G$. What is the minimum number $k$ such that every digraph $G$ has a fractional majority colouring with total weight $\sum_{T\in S(G)}x_T\leq k$? Perhaps it is less than 3.

\end{enumerate}

\subsection*{Acknowledgements} This research was initiated at the Workshop on Graph Theory at  Bellairs Research Institute  (March 25 -- April 1, 2016).


\end{document}